\documentclass[11pt,a4paper]{amsart}
\usepackage{amsmath,amsthm,amsfonts,graphicx}
\usepackage[numbers,sort&compress]{natbib}
\usepackage[lmargin=33mm,rmargin=33mm,tmargin=33mm,bmargin=33mm]{geometry}
\renewcommand{\baselinestretch}{1.25}
\setlength{\footnotesep}{\baselinestretch\footnotesep}
\usepackage[colorlinks=true,citecolor=black,linkcolor=black,urlcolor=blue]{hyperref}
\newcommand{\doi}[1]{\href{http://dx.doi.org/#1}{\texttt{doi:#1}}}
\newcommand{\arXiv}[1]{\href{http://arxiv.org/abs/#1}{\texttt{arXiv:#1}}}
\newcommand{\urlprefix}{}

\theoremstyle{plain}
\newtheorem{theorem}{Theorem}
\newtheorem{lemma}[theorem]{Lemma}

\newtheorem{proposition}[theorem]{Proposition}

\theoremstyle{definition}

%%%%%%%%%%%%%%%%%%%%%%%%%%%%%%%%%%%%%%%%%%%%%%%%%%%%%%%%%%%%%%
\begin{document}

\title[Decompositions of Complete Multipartite Graphs into Complete
  Graphs]{Decompositions of Complete Multipartite Graphs\\ into Complete
  Graphs}

\author{Ruy Fabila-Monroy} \address{\newline Departamento de
  Matem\'aticas \newline Centro de Investigaci\'on y de Estudios
  Avanzados del Instituto Polit\'ecnico Nacional (Cinvestav) \newline
  Distrito Federal, M\'exico} \email{ruyfabila@math.cinvestav.edu.mx}

\author{David~R.~Wood} \address{\newline Department of Mathematics and
  Statistics \newline The University of Melbourne \newline Melbourne,
  Australia} \email{woodd@unimelb.edu.au}

\thanks{\textbf{MSC Classification}: 05B15 Orthogonal arrays, Latin
  squares, Room squares; 05C51 Graph designs and isomomorphic
  decomposition.}

\thanks{R.F.-M. is supported by an Endeavour Fellowship from the
  Department of Education, Employment and Workplace Relations of the
  Australian Government. D.W. is supported by a QEII Research
  Fellowship and a Discovery Project from the Australian Research Council.}

\date{\today}

\begin{abstract}
  Let $k\geq\ell\geq1$ and $n\geq 1$ be integers.  Let $G(k,n)$ be the
  complete $k$-partite graph with $n$ vertices in each colour
  class. An \emph{$\ell$-decomposition} of $G(k,n)$ is a set $X$ of
  copies of $K_k$ in $G(k,n)$ such that each copy of $K_\ell$ in
  $G(k,n)$ is a subgraph of exactly one copy of $K_k$ in $X$. This
  paper asks: when does $G(k,n)$ have an $\ell$-decomposition?  The
  answer is well known for the $\ell=2$ case. In particular, $G(k,n)$
  has a 2-decomposition if and only if there exists $k-2$ mutually
  orthogonal Latin squares of order $n$.  For general $\ell$, we prove
  that $G(k,n)$ has an $\ell$-decomposition if and only if there are
  $k-\ell$ Latin cubes of dimension $\ell$ and order $n$, with an
  additional property that we call mutually invertible. This property
  is stronger than being mutually orthogonal. An $\ell$-decomposition
  of $G(k,n)$ is then constructed whenever no prime less than $k$
  divides $n$.
\end{abstract}

\maketitle

\section{Introduction}

Let $G(k,n)$ be the complete $k$-partite graph with $n$ vertices in
each colour class. Formally, $G(k,n)$ has vertex set $[k]\times[n]$
where $(c,u)$ is adjacent to $(d,v)$ if and only if $c\neq d$. Here
$[n]:=\{1,2,\dots,n\}$. Sometimes we use a vector $(v_1,\dots,v_k)$ to
denote the clique with vertex set $\{(i,v_i):i\in[k]\}$.

For $k\geq\ell\geq2$, an \emph{$\ell$-decomposition} of $G(k,n)$ is a
set $X$ of copies of $K_k$ in $G(k,n)$, such that each copy of
$K_\ell$ in $G(k,n)$ is a subgraph of exactly one copy of $K_k$ in
$X$. Here $K_k$ is the complete graph on $k$ vertices. This paper
considers the question:

\emph{When does $G(k,n)$ have an $\ell$-decomposition?}

First note that every $\ell$-decomposition of $G(k,n)$ contains
exactly $n^\ell$ copies of $K_k$ (since $K_k$ contains
$\binom{k}{\ell}$ copies of $K_\ell$, and $G(k,n)$ contains
$\binom{k}{\ell}n^\ell$ copies of $K_\ell$).

The $\ell=2$ case of our question corresponds to a proper partition of
the edge-set of $G(k,n)$, called a `decomposition'. It is well known
that this case can be answered in terms of the existence of mutually
orthogonal Latin squares (Theorem~\ref{thm:Squares}). These
connections are explored in Section~\ref{sec:LatinSquares}.

Given this relationship, it is natural to consider the relationship
between $\ell$-decompositions and mutually orthogonal Latin cubes,
which are a higher dimensional analogue of Latin squares. However, the
situation is not as simple as the $\ell=2$ case. The first
contribution of this paper is a characterisation of
$\ell$-decompositions in terms of Latin cubes of dimension $\ell$,
with an additional property that we call mutually invertible
(Theorem~\ref{thm:CubeEquivalence}). This property is stronger than
being mutually orthogonal. For $\ell=2$ these two properties are
equivalent. These results are presented in
Section~\ref{sec:LatinCubes}.

Then in Section~\ref{sec:Construction}, we construct an
$\ell$-decomposition whenever no prime less than $k$ divides $n$
(Theorem~\ref{thm:NoSmallPrime}). Finally we relax the definition of
$\ell$-decomposition to allow each $K_\ell$ to appear in \emph{at
  least} one copy of $K_k$. Results are obtained for all $n$
(Theorem~\ref{thm:General}).

\section{Latin Squares and the $\ell=2$ Case}
\label{sec:LatinSquares}

A \emph{Latin square} of order $n$ is an $n\times n$ array in which
each row and each column is a permutation of $[n]$. Two Latin squares
are \emph{orthogonal} if superimposing them produces each element of
$[n]\times[n]$ exactly once. Two or more Latin squares are mutually
orthogonal (MOLS) if each pair is orthogonal.  If $L_1,\dots,L_{k-2}$
are mutually orthogonal Latin squares of order $n$, then it is easily
verified that the $n^2$ copies of $K_k$ defined by the vectors
\begin{equation}
  \label{eqn:ConstructDecomposition}
  (L_1(x,y),\dots,L_{k-2}(x,y),x,y)\enspace,
\end{equation}
where $(x,y) \in [n]^2$, form an edge-partition of $G(k,n)$.  In fact,
the following well-known converse result holds; see
\citep[page~162]{MOLS}.

\begin{theorem}
  \label{thm:Squares}
  $G(k,n)$ has a 2-decomposition if and only if there exists $k-2$
  mutually orthogonal Latin squares of order $n$.
\end{theorem}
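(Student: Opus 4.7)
The forward implication, that $k-2$ MOLS yield a 2-decomposition via formula~(\ref{eqn:ConstructDecomposition}), is a routine check and is already indicated in the paragraph preceding the theorem, so my plan is to concentrate on the converse by reading Latin squares directly off a given 2-decomposition, using the last two colour classes as a coordinate system.

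Suppose $X$ is a 2-decomposition of $G(k,n)$. For each $(x,y)\in[n]^2$, the edge between $(k-1,x)$ and $(k,y)$ lies in exactly one $K_k$ of $X$, which I write as $(L_1(x,y),\dots,L_{k-2}(x,y),x,y)$. This simultaneously defines arrays $L_1,\dots,L_{k-2}\colon[n]^2\to[n]$ and recovers the expected $|X|=n^2$. To see that each $L_i$ is a Latin square I fix a row $x$ and apply the decomposition property to the $n$ edges from $(k-1,x)$ to colour class $i$: each such edge $(i,v)(k-1,x)$ lies in a unique clique of $X$, whose last coordinate $y$ satisfies $L_i(x,y)=v$, giving a bijection $v\leftrightarrow y$. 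The symmetric argument using the edges $(i,v)(k,y)$ shows that each column of $L_i$ is also a permutation of $[n]$.

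For mutual orthogonality I fix $1\le i<j\le k-2$ and $(a,b)\in[n]^2$. The edge $(i,a)(j,b)$ lies in a unique clique of $X$, which has the form $(L_1(x,y),\dots,L_{k-2}(x,y),x,y)$ for a uniquely determined $(x,y)$; since $L_i(x,y)=a$ and $L_j(x,y)=b$, superimposing $L_i$ and $L_j$ realises the pair $(a,b)$ exactly once. No step looks subtle: the proof just records the observation that a 2-decomposition is exactly a consistent way of completing each edge between two prescribed colour classes into a unique $K_k$, and that consistency is the combinatorial content of MOLS. The genuine difficulty will only appear later, when this correspondence is extended to $\ell\ge 3$ and plain orthogonality is shown to be inadequate.
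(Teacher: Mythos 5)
Your argument is correct, and it is worth noting that the paper never writes out a direct proof of this theorem: it cites the result to the literature and later observes that Proposition~\ref{prop:inv_squares} together with Theorem~\ref{thm:CubeEquivalence} gives a ``long-winded'' proof. Your converse is essentially the $\ell=2$ specialisation of that machinery, unrolled into an elementary argument: indexing the cliques of $X$ by their intersection with the last two colour classes is the $\ell=2$ case of the ($\Longrightarrow$) direction of Theorem~\ref{thm:CubeEquivalence}, your row/column check is Lemma~\ref{lem:deftolatin}, and your orthogonality check is Proposition~\ref{prop:inv_ort} --- while the three cases in Proposition~\ref{prop:inv_squares} are exactly your three uses of the decomposition property (edges into class $k-1$ or $k$ give the Latin condition, edges between classes $i$ and $j$ give orthogonality, and the edge between classes $k-1$ and $k$ gives the indexing), run in the opposite direction. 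Each step you invoke is justified by the correct uniqueness statement (two cliques of $X$ sharing an edge must coincide), so there is no gap. What your direct route buys is a self-contained proof at the cost of staying specific to $\ell=2$; what the paper's route buys is the identification of $2$-extendability as the property that actually matters, which is what makes the failure of plain orthogonality for $\ell\ge 3$ (the underlined entries in the paper's $3$-cube example) visible rather than mysterious.
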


There are at most $n-1$ MOLS of order $n$; see
\citep[page~162]{MOLS}. On the other hand, \citet{MacNeish} proved
that if $p$ is the least prime factor of $n$ then there exists $p-1$
MOLS of order $n$. With Theorem~\ref{thm:Squares} this implies:

\begin{proposition}
  If $p$ is the least prime factor of $n$ and $k=p+1$, then there
  exists an edge-partition of $G(k,n)$ into $n^2$ copies of $K_k$.
\end{proposition}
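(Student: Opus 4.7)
The plan is to assemble the proposition from two ingredients already provided in the text: MacNeish's theorem (quoted immediately before the statement) and the ``if'' direction of Theorem~\ref{thm:Squares}. This is essentially a corollary, so there is no real obstacle---the task is just to check that the hypotheses line up.

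First, I would invoke MacNeish's theorem: since $p$ is the least prime factor of $n$, there exist $p-1$ mutually orthogonal Latin squares of order $n$. Next, observe that the hypothesis $k = p+1$ rewrites this as $k-2$ MOLS of order $n$, which is exactly the number needed to apply Theorem~\ref{thm:Squares}. The ``if'' direction of that theorem then produces a $2$-decomposition of $G(k,n)$, i.e.\ a partition of $E(G(k,n))$ into copies of $K_k$.

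Finally, I would appeal to the counting remark made right after the definition of $\ell$-decomposition: every $\ell$-decomposition of $G(k,n)$ consists of exactly $n^\ell$ copies of $K_k$. Specialising to $\ell = 2$ gives $n^2$ copies, matching the statement. The whole proof is a two-line combination, and the only thing to be careful about is presenting these cited results in the right order; I would not need to reprove either Theorem~\ref{thm:Squares} or MacNeish's construction.
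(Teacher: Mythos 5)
Your proposal is correct and matches the paper exactly: the proposition is stated as an immediate consequence of MacNeish's theorem (giving $p-1=k-2$ MOLS of order $n$) combined with Theorem~\ref{thm:Squares}, with the count of $n^2$ copies following from the remark after the definition of $\ell$-decomposition. Nothing further is needed.
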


Bose, Shrikhande and Parker~\citep{BS-TAMS60,BSP-CJM60} proved that
for all $n$ except $2$ and $6$ there exists a pair of MOLS of order
$n$. With Theorem~\ref{thm:Squares} this implies:

\begin{proposition}
  For all $n$ except $2$ and $6$ there is an edge-partition of
  $G(4,n)$ into $n^2$ copies of $K_4$.
\end{proposition}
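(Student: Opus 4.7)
The proposition is an immediate corollary, so the plan is essentially to quote the two ingredients already in place and check that the arithmetic matches.

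First I would invoke Theorem~\ref{thm:Squares} specialized to $k=4$ and $\ell=2$: a 2-decomposition of $G(4,n)$ (equivalently, an edge-partition of $G(4,n)$ into copies of $K_4$) exists if and only if there are $k-2 = 2$ mutually orthogonal Latin squares of order $n$. By the remark just before Theorem~\ref{thm:Squares}, any such decomposition automatically consists of $n^2$ copies of $K_k = K_4$, so the count of $n^2$ in the proposition is built in and does not require separate verification.

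Next I would cite the Bose--Shrikhande--Parker theorem, already stated in the paragraph preceding the proposition: for every positive integer $n \notin \{2, 6\}$ there exists a pair of MOLS of order $n$. (The case $n=1$ is trivial: a single copy of $K_4$ on one vertex per part, with empty Latin squares.) Combining these two facts yields the proposition directly, by taking the two MOLS $L_1, L_2$ furnished by Bose--Shrikhande--Parker and forming the $n^2$ cliques $(L_1(x,y), L_2(x,y), x, y)$ as in equation~\eqref{eqn:ConstructDecomposition}.

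There is no real obstacle to overcome here: both the characterization and the existence of a pair of MOLS are quoted results, and the proof is a one-line application. The only thing worth flagging explicitly, for clarity, is that the proposition does not claim anything in the excluded cases $n \in \{2,6\}$, which is consistent with the fact that no pair of MOLS of those orders exists (Euler's conjecture for $n=2$ is trivial, and Tarry's theorem handles $n=6$).
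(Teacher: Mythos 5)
Your proposal is correct and matches the paper's own (implicit) argument exactly: the paper derives this proposition by combining the Bose--Shrikhande--Parker existence of a pair of MOLS of order $n$ for all $n\notin\{2,6\}$ with Theorem~\ref{thm:Squares}, and the count of $n^2$ copies is automatic as noted in the introduction. Nothing further is needed.
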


Other values of $k$ and $n$ for which there is a 2-decomposition of
$G(k,n)$ are immediately obtained by applying
Theorem~\ref{thm:Squares} with known results about the existence of
MOLS; see \citep{MOLS}.

\section{Latin Cubes}
\label{sec:LatinCubes}

A \emph{$d$-dimensional Latin cube of order $n$} is a function
$L:[n]^d\rightarrow[n]$ such that each row is a permutation of $[n]$;
that is, for all $i\in[d]$ and
$x_1,\dots,x_{i-1},x_{i+1},\dots,x_d\in[n]$,
$$\{L(x_1,\dots,x_{i-1},j,x_{i+1}):j\in[n]\}=[n]\enspace.$$

If $L_1,\dots,L_d$ are $d$-dimensional Latin cubes of order $n$, and
for every $(v_1,\dots,v_d)\in[n]^d$ there exists $x_1,\dots,x_d$ such
that $L_i(x_1,\dots,x_d)=v_i$ for all $i\in[d]$, then $L_1,\dots,L_d$
are said to be \emph{orthogonal}. Thus superimposing $L_1,\dots,L_d$ produces
each element of $[n]^d$ exactly once.  If every $d$-tuple of a set
$\mathcal{L}$ of $d$-dimensional Latin cubes of order $n$ are
orthogonal then $\mathcal{L}$ is \emph{mutually orthogonal}.  For
results on mutually orthogonal Latin cubes and related concepts see
\citep{Kerr-FQ82,Aggarwal75,Trenkler-CMJ05,ASS82,AS-FQ81,AHVS76,AS74,SS-AC08}.

From an $\ell$-decomposition of $G(k,n)$, it is possible to construct
a set of $k-\ell$ mutually orthogonal $\ell$-dimensional Latin cubes (see
Theorem~\ref{thm:CubeEquivalence}). However, the natural analogue of
\eqref{eqn:ConstructDecomposition} does not hold.  Consider the
following set $\{L_1,L_2,L_3\}$ of three mutually orthogonal
$3$-dimensional Latin cubes of order $4$.

\bigskip\noindent
\setlength{\tabcolsep}{1.25mm}
 \begin{tabular}{|c|c|c|c|}
    \hline
    $111$ & $233$ & $344$ & $422$\\ \hline
    $343$ & $421$ & $112$ & $234$\\ \hline 
    $424$ & $342$ & $231$ & $113$\\ \hline
    $232$ & $114$ & $423$ & $341$\\ \hline
  \end{tabular}
 \hspace*{1mm}
  \begin{tabular}{|c|c|c|c|}
    \hline
    $222$ & $\underline{14}4$ & $433$ & $311$\\ \hline
    $434$ & $312$ & $221$ & $\underline{14}3$\\ \hline
    $313$ & $431$ & $\underline{14}2$ & $224$\\ \hline
    $\underline{14}1$ & $223$ & $314$ & $432$\\ \hline
  \end{tabular}
 \hspace*{1mm}
  \begin{tabular}{|c|c|c|c|}
    \hline
    $333$ & $411$ & $122$ & $244$\\ \hline
    $121$ & $243$ & $334$ & $412$\\ \hline
    $242$ & $124$ & $413$ & $331$\\ \hline
    $414$ & $332$ & $241$ & $123$\\ \hline
  \end{tabular}
 \hspace*{1mm}
  \begin{tabular}{|c|c|c|c|}
    \hline
    $444$ & $322$ & $211$ & $133$\\ \hline
    $212$ & $134$ & $443$ & $321$\\ \hline
    $131$ & $213$ & $324$ & $442$\\ \hline
    $323$ & $441$ & $132$ & $214$\\ \hline
  \end{tabular}

\bigskip\noindent In this example, the Latin cubes are superimposed so that
$L_1$ is:
\begin{center}
  \medskip
  \begin{tabular}{|c|c|c|c|}
    \hline
    $1$ & $2$ & $3$ & $4$\\ \hline
    $3$ & $4$ & $1$ & $2$\\ \hline 
    $4$ & $3$ & $2$ & $1$\\ \hline
    $2$ & $1$ & $4$ & $3$\\ \hline
  \end{tabular}
 \hspace*{1mm}
  \begin{tabular}{|c|c|c|c|}
    \hline
    $2$ & $1$ & $4$ & $3$\\ \hline
    $4$ & $3$ & $2$ & $1$\\ \hline
    $3$ & $4$ & $1$ & $2$\\ \hline
    $1$ & $2$ & $3$ & $4$\\ \hline
  \end{tabular}
 \hspace*{1mm}
  \begin{tabular}{|c|c|c|c|}
    \hline
    $3$ & $4$ & $1$ & $2$\\ \hline
    $1$ & $2$ & $3$ & $4$\\ \hline
    $2$ & $1$ & $4$ & $3$\\ \hline
    $4$ & $3$ & $2$ & $1$\\ \hline
  \end{tabular}
 \hspace*{1mm}
  \begin{tabular}{|c|c|c|c|}
    \hline
    $4$ & $3$ & $2$ & $1$\\ \hline
    $2$ & $1$ & $4$ & $3$\\ \hline
    $1$ & $2$ & $3$ & $4$\\ \hline
    $3$ & $4$ & $1$ & $2$\\ \hline
  \end{tabular}

  \medskip
\end{center}
The natural analogue of \eqref{eqn:ConstructDecomposition} would be
to construct copies of $K_6$ in $G(6,4)$ of the form
$$(L_1(x,y,z),L_2(x,y,z),L_3(x,y,z),x,y,z)\enspace,$$
where $x,y,z\in[4]$. However, in this case not every copy of $K_3$ in 
$G(6,4)$ is covered. For example, $\{(1,1),(2,2),(6,2)\}$ is not
covered (since $z=2$ and $L_1(x,y,2)=1$ implies $L_2(x,y,2)=4$, as shown by the underlined entries above).

Below we introduce a stronger condition than orthogonality so that
this construction does provide an $\ell$-decomposition.

We consider $k$-tuples in $[n]^k$ to be functions from $[k]$ to
$[n]$. So that for $t:=(t_1,\dots,t_k)$, we use the notation
$t(i)=t_i$. A set $X$ of $k$-tuples in $[n]^k$ is said to be
\emph{$\ell$-extendable} if for all indices $s_1 < s_2 < \cdots < s_\ell$
(where $s_i \in [k]$) and for every element $(x_1,\dots,x_\ell) \in[n]^\ell$, there exists a unique $t \in X$ such that $t(s_i)=x_i$ for
all $i\in[\ell]$.

\begin{lemma}
  \label{lem:deftolatin}
  Let $X$ be an $\ell$-extendable set of $k$-tuples in $[n]^k$, and let
  $s_1 < s_2 < \cdots < s_\ell$, where $s_i \in [k]$. Let $t$ be
  the unique $k$-tuple such that $t(s_i)=x_i$ for all $i\in[\ell]$. 
For every $j   \in [k]\setminus\{s_1,s_2,\dots,s_\ell\}$, let $L_j$ be the function
  defined by $L_j(x_1,\dots,x_\ell):=t(j)$.  Then $L_j$ is an
  $\ell$-dimensional Latin cube.
\end{lemma}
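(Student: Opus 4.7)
The plan is to verify directly that each row of $L_j$ is a permutation of $[n]$, using $\ell$-extendability as the only tool. Fix an arbitrary coordinate direction $i \in [\ell]$ and arbitrary values $x_1,\dots,x_{i-1},x_{i+1},\dots,x_\ell \in [n]$. As a function of $x_i$ alone, the map $x_i \mapsto L_j(x_1,\dots,x_\ell)$ goes from $[n]$ to $[n]$, so it suffices to show it is injective.

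For injectivity, I would suppose for contradiction that two distinct values $x_i \neq x_i'$ both yield the same output $v = L_j(x_1,\dots,x_i,\dots,x_\ell) = L_j(x_1,\dots,x_i',\dots,x_\ell)$. By the definition of $L_j$, this gives two $k$-tuples $t, t' \in X$ whose values on the $\ell$ coordinates $s_1,\dots,s_\ell$ determine them as described in the statement; in particular $t(s_m) = t'(s_m) = x_m$ for all $m \neq i$, while $t(j) = t'(j) = v$.

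The key trick is now to swap out the index $s_i$ for the index $j$. Since $j \notin \{s_1,\dots,s_\ell\}$, the set $\{s_1,\dots,s_{i-1},s_{i+1},\dots,s_\ell,j\}$ still has $\ell$ distinct elements of $[k]$; after sorting them in increasing order, we may apply the $\ell$-extendability of $X$ to this new $\ell$-tuple of indices with the common values just listed. The uniqueness clause in the definition of $\ell$-extendability forces $t = t'$, but then evaluating at $s_i$ gives $x_i = t(s_i) = t'(s_i) = x_i'$, a contradiction. Hence the row is injective, therefore a permutation, and $L_j$ is a Latin cube.

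There is no substantial obstacle here; the whole argument is an unpacking of definitions, and the only mildly delicate point is noticing that the $\ell$ indices $\{s_1,\dots,s_\ell\} \setminus \{s_i\} \cup \{j\}$ on which $t$ and $t'$ agree are exactly a legal input to the $\ell$-extendability hypothesis.
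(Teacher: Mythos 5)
Your proof is correct and is essentially identical to the paper's own argument: both establish injectivity of each row of $L_j$ by replacing the index $s_i$ with $j$ in the $\ell$-tuple of positions and invoking the uniqueness clause of $\ell$-extendability to force the two witnessing tuples to coincide. No differences worth noting.
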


\begin{proof}
  Let $(x_1,\dots,x_\ell) \in [n]^\ell$ and $h \in [\ell]$. Suppose
  that for some $x_h'\in [n]$,
$$L_j(x_1,\dots,x_{h-1},x_h,x_{h+1},\dots,x_\ell)=y=L_j(x_1,\dots,x_{h-1},x_h',x_{h+1},\dots,x_\ell)\enspace.$$
Then there is a tuple $t'$ in $X$ such that $t'(s_i)=x_i$ for $s_i \in
\{s_1,\dots,s_\ell\}\setminus \{s_h\}$ and $t(j)=y$.  Since $X$ is
$\ell$-extendable, this tuple is unique. Therefore $x_h=x_h'$ 
and $L_j$ is a Latin cube.
\end{proof}

A set $L_1,\dots,L_k$ of $\ell$-dimensional Latin cubes of order $n$
is said to be \emph{mutually invertible} if
$$\{(L_1(x_1,\dots,x_\ell),\dots,L_k(x_1,\dots,x_\ell),x_1,\dots,x_\ell) :(x_1,\dots,x_\ell)\in [n]^\ell\}$$
is $\ell$-extendable.

\begin{proposition}
  \label{prop:inv_ort}
  Every set $L_1,\dots,L_k$ of mutually invertible $\ell$-dimensional Latin cubes is
  mutually orthogonal.
\end{proposition}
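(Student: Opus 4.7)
The plan is to observe that mutual invertibility, restricted to the special case where the $\ell$ coordinate-positions chosen for $\ell$-extendability all lie among the first $k$ coordinates of the tuples defining $X$, is already the statement of orthogonality for the corresponding $\ell$-subset of cubes.

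First I would fix an arbitrary $\ell$-subset $\{L_{i_1},\dots,L_{i_\ell}\}$ with $i_1<\cdots<i_\ell$ in $[k]$, together with an arbitrary target $(v_1,\dots,v_\ell)\in[n]^\ell$. To establish orthogonality of these $\ell$ cubes, it suffices to produce $(y_1,\dots,y_\ell)\in[n]^\ell$ such that $L_{i_j}(y_1,\dots,y_\ell)=v_j$ for every $j\in[\ell]$.

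Next I would invoke the $\ell$-extendability of the set
$$X=\{(L_1(x_1,\dots,x_\ell),\dots,L_k(x_1,\dots,x_\ell),x_1,\dots,x_\ell):(x_1,\dots,x_\ell)\in[n]^\ell\}$$
with coordinate positions $s_j:=i_j$, which all lie in $[k]\subset[k+\ell]$, and with the prescribed values $v_1,\dots,v_\ell$. This yields a unique tuple $t\in X$ with $t(i_j)=v_j$ for all $j\in[\ell]$. Since every element of $X$ has the form displayed above, we may write $t=(L_1(y_1,\dots,y_\ell),\dots,L_k(y_1,\dots,y_\ell),y_1,\dots,y_\ell)$ for some $(y_1,\dots,y_\ell)\in[n]^\ell$, and reading off coordinates $i_1,\dots,i_\ell$ gives $L_{i_j}(y_1,\dots,y_\ell)=v_j$, as required.

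There is essentially no obstacle: once one recognises that $\ell$-extendability applied entirely within the first $k$ coordinates of $X$ is precisely orthogonality of the corresponding $\ell$ cubes, the proof is a one-step unwinding of definitions. The only point requiring care is to choose the indices $s_j$ inside $[k]$, so that the fixed values constrain the cube outputs rather than the underlying arguments $x_j$. I would also remark that the converse fails, as witnessed by the worked $3$-dimensional example earlier in this section: the three cubes there are mutually orthogonal but not mutually invertible, since mutual invertibility is strictly stronger in that it additionally permits the $s_j$ to mix cube-coordinates with $x$-coordinates.
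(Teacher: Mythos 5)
Your proof is correct and follows essentially the same route as the paper's: both arguments simply apply the $\ell$-extendability of $X$ with all $\ell$ index positions chosen inside $[k]$, which is exactly the orthogonality condition for the selected $\ell$ cubes. Your closing remark about the $3$-dimensional example correctly identifies why the converse fails, namely that invertibility also constrains index choices mixing cube-coordinates with argument-coordinates.
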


\begin{proof}
  Let $s_1< s_2 <\cdots< s_\ell$ with $s_i \in [k]$ and let
  $(y_1,\dots,y_\ell)\in [n]^\ell$. It remains to show that there
  exists a unique $(x_1,\dots,x_\ell) \in [n]^d$ such that
  $$(L_{s_1}(x_1,\dots,x_\ell),\dots,L_{s_\ell}(x_1,\dots,x_\ell))=(y_1,\dots,y_\ell)\enspace.$$
  This follows from the fact that
$$\{(L_1(x_1,\dots,x_\ell),\dots,L_k(x_1,\dots,x_\ell),x_1,\dots,x_\ell) :(x_1,\dots,x_\ell)\in [n]^\ell\}$$
is $\ell$-extendable.
\end{proof}

In the case of $2$-dimensional Latin cubes, mutual orthogonality is
equivalent to mutual invertibility.

\begin{proposition}
  \label{prop:inv_squares}
  Every set $L_1,\dots,L_k$ of mutually orthogonal Latin squares is
  mutually invertible.
\end{proposition}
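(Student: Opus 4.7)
The plan is to verify directly that the set
\[
S := \{(L_1(x_1,x_2),\dots,L_k(x_1,x_2),x_1,x_2) : (x_1,x_2) \in [n]^2\}
\]
of $(k+2)$-tuples is $2$-extendable, by splitting into three cases according to where the two chosen indices $s_1<s_2$ in $[k+2]$ fall relative to the ``Latin square block'' $[k]$ and the ``index block'' $\{k+1,k+2\}$.

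First I would handle the easy case $s_1=k+1, s_2=k+2$: here we must solve $x_1=y_1, x_2=y_2$, and the tuple corresponding to $(x_1,x_2)=(y_1,y_2)$ is the unique element of $S$ that works. Next, in the mixed case where exactly one of $s_1,s_2$ lies in $\{k+1,k+2\}$, say $s_1\in[k]$ and $s_2\in\{k+1,k+2\}$, fixing one of the coordinates $x_1$ or $x_2$ to equal $y_2$ reduces the condition $L_{s_1}(x_1,x_2)=y_1$ to a row (or column) lookup in the Latin square $L_{s_1}$; since each row and each column of a Latin square is a permutation of $[n]$, exactly one value of the remaining coordinate works.

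The core case is $s_1,s_2\in[k]$. Here we need a unique $(x_1,x_2)\in[n]^2$ with $L_{s_1}(x_1,x_2)=y_1$ and $L_{s_2}(x_1,x_2)=y_2$. This is precisely the orthogonality of $L_{s_1}$ and $L_{s_2}$: superimposing them produces each element of $[n]^2$ exactly once, so the preimage of $(y_1,y_2)$ is a singleton. Existence and uniqueness both follow.

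I do not anticipate a genuine obstacle — the proposition essentially says that, in dimension $\ell=2$, the extra conditions required by mutual invertibility beyond mutual orthogonality (namely, extendability from index pairs involving one of the ``parameter coordinates'' $x_1,x_2$) are automatic from the defining Latin square property that rows and columns are permutations. The only care needed is bookkeeping of the three cases so that every choice of $s_1<s_2$ in $[k+2]$ is covered.
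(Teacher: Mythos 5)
Your proposal is correct and follows essentially the same route as the paper: it verifies $2$-extendability of the superimposed tuple set by the same three-case split on the positions of $s_1<s_2$ relative to $[k]$ and $\{k+1,k+2\}$, invoking the row/column permutation property of a single Latin square in the mixed case and orthogonality in the case of two indices in $[k]$. No gaps.
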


\begin{proof}
We prove that $$X:=\{(L_1(x,y),\dots,L_k(x,y),x,y):(x,y) \in [n]^2\}$$
is $2$-extendable.  Let $z_1,z_2\in[k+2]$ with $z_1< z_2$. We claim
that for each $(x_1,x_2) \in [n]$ there is a unique tuple $t \in
X $ such that $t(z_1)=x_1$ and $t(z_2)=x_2$. Consider the
following cases.
\begin{itemize}
\item $z_1=k+1$ and $z_2=k+2$: The claim immediately follows from
  the definition of $X$.
\item $z_1\leq k$ and $z_2\in\{k+1, k+2\}$: The claim  follows from the fact that $L_{z_1}$
  is a Latin square.
\item $z_1 \leq k$ and $z_2 \leq k$: The claim follows from the fact that $L_{z_1}$ and $L_{z_2}$
are orthogonal.
\end{itemize}
Therefore $X$ is $2$-extendable and $L_1,\dots,L_k$ is a set of mutually invertible
Latin squares.
\end{proof}

\begin{theorem}
  \label{thm:CubeEquivalence}
  $G(k,n)$ has an $\ell$-decomposition if and only if there are
  $k-\ell$ mutually invertible Latin $\ell$-dimensional cubes of order
  $n$.
\end{theorem}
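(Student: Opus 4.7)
The plan is to prove both implications by translating directly between $\ell$-decompositions and $\ell$-extendable sets of $k$-tuples, leveraging Lemma~\ref{lem:deftolatin} for the harder direction. The key observation is that the set $X$ appearing in the definition of mutually invertible is designed precisely to encode an $\ell$-decomposition: identifying a $K_k$ in $G(k,n)$ with the $k$-tuple $(v_1,\dots,v_k)$ listing its vertex in each colour class, the condition that every $K_\ell$ lies in exactly one copy of $K_k$ is word-for-word $\ell$-extendability of the associated set of tuples.

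For the ($\Leftarrow$) direction, suppose $L_1,\dots,L_{k-\ell}$ are mutually invertible $\ell$-dimensional Latin cubes of order $n$. I would define
$$X:=\{(L_1(x_1,\dots,x_\ell),\dots,L_{k-\ell}(x_1,\dots,x_\ell),x_1,\dots,x_\ell):(x_1,\dots,x_\ell)\in[n]^\ell\},$$
interpret each tuple as a copy of $K_k$ in $G(k,n)$, and observe that a $K_\ell$ on colour classes $s_1<\cdots<s_\ell$ with vertices $y_1,\dots,y_\ell$ is contained in exactly the copies of $K_k$ corresponding to tuples $t\in X$ with $t(s_i)=y_i$ for all $i\in[\ell]$. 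By definition of mutual invertibility, $X$ is $\ell$-extendable, so there is exactly one such $t$, and $X$ is an $\ell$-decomposition.

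For the ($\Rightarrow$) direction, let $X$ be an $\ell$-decomposition of $G(k,n)$, and again view $X$ as a set of $k$-tuples in $[n]^k$. The $\ell$-decomposition property says that for every choice of indices $s_1<\cdots<s_\ell$ in $[k]$ and every $(y_1,\dots,y_\ell)\in[n]^\ell$, there is a unique $t\in X$ with $t(s_i)=y_i$, which is exactly $\ell$-extendability. Applying Lemma~\ref{lem:deftolatin} with $(s_1,\dots,s_\ell)=(k-\ell+1,\dots,k)$ yields, for each $j\in[k-\ell]$, an $\ell$-dimensional Latin cube $L_j$ of order $n$ defined by $L_j(x_1,\dots,x_\ell):=t(j)$, where $t$ is the unique tuple in $X$ with $t(k-\ell+i)=x_i$ for all $i$. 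By construction the set appearing in the definition of mutual invertibility for $L_1,\dots,L_{k-\ell}$ is exactly $X$ itself, which is $\ell$-extendable; hence $L_1,\dots,L_{k-\ell}$ are mutually invertible.

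Because both directions amount to reading off the same combinatorial data in two formats, no substantial obstacle arises; the only nontrivial content is Lemma~\ref{lem:deftolatin}, which has already been proved. The definitions of \emph{$\ell$-extendable} and \emph{mutually invertible} were crafted to make this equivalence essentially tautological, so the main thing to get right is the bookkeeping that matches colour classes of $G(k,n)$ to coordinates of the $k$-tuples.
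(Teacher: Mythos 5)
Your proposal is correct and follows essentially the same route as the paper: the ($\Leftarrow$) direction builds the copies of $K_k$ from the tuples $(L_1(x),\dots,L_{k-\ell}(x),x_1,\dots,x_\ell)$ and invokes $\ell$-extendability, while the ($\Rightarrow$) direction reads the decomposition as an $\ell$-extendable set of $k$-tuples and applies Lemma~\ref{lem:deftolatin}. Your explicit check that the set in the definition of mutual invertibility is exactly $X$ itself is a small but welcome detail that the paper leaves implicit.
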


\begin{proof}
  ($\Longleftarrow$) Let $L_1,\dots,L_{k-\ell}$ be $k-\ell$ mutually
  invertible $\ell$-dimensional Latin cubes of order $n$.  For each
  $(x_1,\dots,x_\ell)\in [n]^\ell$, let $K(x_1,\dots,x_\ell)$ be the
  copy of $K_k$ defined by the vector
  $(v_1,\dots,v_{k-\ell},x_1,\dots,x_\ell)$ where
  $v_i:=L_i(x_1,\dots,x_\ell)$. This defines $n^\ell$ copies of $K_k$.
That each copy of $K_\ell$ in $G(k,n)$ is in one such copy
  of $K_k$ follows immediately from the fact that
  $$\{(v_1,\dots,v_{k-\ell},x_1,\dots,x_\ell):(x_1,\dots,x_\ell) \in [n]^\ell\}$$
  is $\ell$-extendable.
 
  ($\Longrightarrow$) Consider an $\ell$-decomposition $X$ of
  $G(k,n)$. Thus $X$ is a set of copies of $K_k$ in $G(k,n)$ such that
  each copy of $K_\ell$ is in exactly one copy of $K_k$ in
  $X$. Consider each copy of $K_k$ in $X$ to be a $k$-tuple in
  $[n]^k$. We now show that $X$ is $\ell$-extendable.  Let $s_1<\cdots<s_\ell$ be
  elements of $[k]$ and $(x_1,\dots,x_\ell) \in [n]^\ell$. There is a
  unique tuple $(t_1,\dots,t_k)$ in $X$ containing the
copy of  $K_\ell$ with vertex set
  $\{(s_1,x_1),\dots,(s_\ell,x_\ell)\}$. Thus $t(s_i)=x_i$ for all
  $i\in[\ell]$. Therefore $X$ is $\ell$-extendable. By
  Lemma~\ref{lem:deftolatin}, we obtain $k-\ell$ mutually
  invertible Latin cubes.
\end{proof}

Note that Proposition~\ref{prop:inv_squares} and 
Theorem~\ref{thm:CubeEquivalence} provide a long-winded proof of
Theorem~\ref{thm:Squares}.

\section{Construction of an $\ell$-Decomposition}
\label{sec:Construction}

This section describes a construction of an $\ell$-decomposition.

\begin{lemma}
  \label{lem:Heart}
  If $n\geq k\geq\ell\geq 2$ and $n$ is prime, then $G(k,n)$ has an
  $\ell$-decomposition.
\end{lemma}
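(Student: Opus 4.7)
The plan is to use the field structure of $\mathbb{F}_n$ (available because $n$ is prime) in a Reed--Solomon / Vandermonde construction. After identifying the label set $[n]$ with $\mathbb{F}_n$, I will realise the required cliques as the image of a single linear map $\mathbb{F}_n^\ell\to\mathbb{F}_n^k$, so that the $\ell$-decomposition property reduces to invertibility of Vandermonde matrices.

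Since $n\geq k$, pick $k$ distinct elements $\alpha_1,\dots,\alpha_k\in\mathbb{F}_n$, and let $A$ be the $k\times\ell$ matrix with entries $A_{ij}=\alpha_i^{j-1}$. For each $x=(x_1,\dots,x_\ell)\in\mathbb{F}_n^\ell$, define $C_x$ to be the copy of $K_k$ in $G(k,n)$ whose vertex in class $i$ is $(i,(Ax)_i)$ for each $i\in[k]$. The claim is that $X:=\{C_x:x\in\mathbb{F}_n^\ell\}$ is an $\ell$-decomposition of $G(k,n)$.

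To verify the claim, fix an arbitrary copy of $K_\ell$ in $G(k,n)$, specified by vertices $(s_1,y_1),\dots,(s_\ell,y_\ell)$ with $s_1<\cdots<s_\ell$ in $[k]$ and $y_1,\dots,y_\ell\in\mathbb{F}_n$. The clique $C_x$ contains this $K_\ell$ if and only if $A_S\,x=y$, where $A_S$ denotes the $\ell\times\ell$ submatrix of $A$ with rows $s_1,\dots,s_\ell$. Since $A_S$ is a Vandermonde matrix on the distinct nodes $\alpha_{s_1},\dots,\alpha_{s_\ell}$, it is invertible, so $x=A_S^{-1}y$ is the unique solution. Hence every copy of $K_\ell$ in $G(k,n)$ lies in exactly one clique of $X$.

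There is no serious obstacle beyond bookkeeping: applying the same Vandermonde argument with $S=\{1,\dots,\ell\}$ shows $x\mapsto C_x$ is injective and $|X|=n^\ell$, matching the count forced by an $\ell$-decomposition. If one prefers to route through Theorem~\ref{thm:CubeEquivalence}, an alternative is to right-multiply $A$ by the inverse of its bottom $\ell\times\ell$ Vandermonde block to obtain a $k\times\ell$ matrix $A'$ whose last $\ell$ rows form the identity; the first $k-\ell$ rows of $A'$ then define, via linear forms, the required $k-\ell$ mutually invertible $\ell$-dimensional Latin cubes, with the Latin-cube condition (all coefficients nonzero) and mutual invertibility (every $\ell\times\ell$ submatrix invertible) both inherited from the MDS property of $A$.
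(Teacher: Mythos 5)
Your proposal is correct and is essentially the paper's own argument: the paper defines the clique through $(a_1,\dots,a_\ell)$ by $v_c=\sum_{j}c^{j}a_j\bmod n$ for $c\in[k]$, which is exactly your linear map $x\mapsto Ax$ with evaluation points $\alpha_i=i$, and it likewise reduces the covering condition to the invertibility of an $\ell\times\ell$ Vandermonde submatrix over $\mathbb{Z}_n$. Your version is marginally more explicit about uniqueness (hence the ``exactly one'' requirement) and about the clique count, but there is no substantive difference in approach.
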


\begin{proof}
  Given $(a_1,\dots,a_\ell)\in[n]^\ell$, let $K(a_1,\dots,a_\ell)$ be
  the set of vertices
$$K(a_1,\dots,a_\ell):=\left\{\bigg(c,\Big(\sum_{j=0}^{\ell-1}c^ja_j\Big)\bmod{n}\bigg):c\in[k]\right\}$$
in $G(k,n)$. Observe that $K(a_1,\dots,a_\ell)$ induces a copy of
$K_k$ in $G(k,n)$, and we have $n^\ell$ such copies.  We claim that
each copy of $K_\ell$ is in some $K(a_1,\dots,a_\ell)$. Let
$S=\{(c_i,v_i):i\in[\ell]\}$ be a set of vertices inducing $K_\ell$. Thus $c_i\neq c_j$ for all $i\neq j$. We need to show that
$S\subseteq K(a_1,\dots,a_\ell)$ for some $a_1,\dots,a_\ell$. That is,
for all $i\in[\ell]$,
$$\sum_{j=0}^{\ell-1}c_i^ja_j\equiv v_i\pmod{n}\enspace.$$
Equivalently,
\begin{equation}
  \label{eqn:Matrix}
  \begin{bmatrix}
    1&c_1&c_1^2&\dots&c_1^{\ell-1}\\
    1&c_2&c_2^2&\dots&c_2^{\ell-1}\\
    &&&\vdots\\
    1&c_\ell&c_\ell^2&\dots&c_\ell^{\ell-1}
  \end{bmatrix}
  \begin{bmatrix}
    a_1\\
    a_2\\
    \vdots\\
    a_\ell
  \end{bmatrix}
  \equiv
  \begin{bmatrix}
    v_1\\
    v_2\\
    \vdots\\
    v_\ell
  \end{bmatrix}
  \pmod{n}\enspace.
\end{equation}
This $\ell\times\ell$ matrix is a Vandermonde matrix, which has
non-zero determinant $$\prod_{1\leq i<j\leq\ell}(c_i-c_j)\enspace.$$
Since $c_i\neq c_j$ and $n$ is a prime greater than any $c_i-c_j$,
this determinant is non-zero modulo $n$. (This trick of taking a
Vandermonde matrix modulo a prime is well known, and at least dates to
a 1951 construction by \citet{Erdos51} for the no-three-in-line
problem.)\ Thus in the vector space $\mathbb{Z}_n^\ell$ (over the
finite field $\mathbb{Z}_n$), the row-vectors of this matrix are
linearly independent and \eqref{eqn:Matrix} has a solution. That is,
$S\subseteq K(a_1,\dots,a_\ell)$ for some $a_1,\dots,a_\ell$.
\end{proof}

The next lemma is analogous to a Kronecker product of Latin squares.

\begin{lemma}
  \label{lem:BasicProduct}
  For all integers $k\geq\ell\geq 1$ and $p,q\geq1$, if both $G(k,p)$
  and $G(k,q)$ have $\ell$-decompositions, then $G(k,pq)$ has an
  $\ell$-decomposition.
\end{lemma}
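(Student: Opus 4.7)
The plan is to use a direct product construction analogous to the Kronecker product of Latin squares. First I would identify the vertex set $[k]\times[pq]$ of $G(k,pq)$ with $[k]\times([p]\times[q])$ via a fixed bijection $[pq]\leftrightarrow[p]\times[q]$, so that each vertex of $G(k,pq)$ can be written as $(c,(u,u'))$ with $c\in[k]$, $u\in[p]$, $u'\in[q]$.

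Let $X_p$ be an $\ell$-decomposition of $G(k,p)$ and $X_q$ an $\ell$-decomposition of $G(k,q)$. For each pair $(K,K')\in X_p\times X_q$, write $K=(v_1,\dots,v_k)\in[p]^k$ and $K'=(w_1,\dots,w_k)\in[q]^k$, and define a copy of $K_k$ in $G(k,pq)$ by the vector
\begin{equation*}
K\otimes K':=\bigl((v_1,w_1),(v_2,w_2),\dots,(v_k,w_k)\bigr).
\end{equation*}
Let $X:=\{K\otimes K':K\in X_p,\,K'\in X_q\}$. Since $|X_p|=p^\ell$ and $|X_q|=q^\ell$, we have $|X|=(pq)^\ell$, which is the correct number of copies in any $\ell$-decomposition of $G(k,pq)$.

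The remaining step, which is the only substantive part, is to verify that every $K_\ell$ in $G(k,pq)$ is a subgraph of exactly one member of $X$. Fix such a $K_\ell$ with vertex set $\{(c_i,(u_i,u_i')):i\in[\ell]\}$, where $c_1<\cdots<c_\ell$. Projecting onto the first coordinate of $[p]\times[q]$ yields the $K_\ell$ in $G(k,p)$ with vertex set $\{(c_i,u_i):i\in[\ell]\}$, which by hypothesis lies in a unique $K\in X_p$; similarly, projecting onto the second coordinate gives a unique $K'\in X_q$. By construction $K\otimes K'$ contains our $K_\ell$. Conversely, if $K_1\otimes K_1'\in X$ contains the $K_\ell$, then $K_1$ contains the projected $K_\ell$ in $G(k,p)$ and $K_1'$ contains the projected $K_\ell$ in $G(k,q)$, forcing $K_1=K$ and $K_1'=K'$ by uniqueness in the two given decompositions.

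I do not expect any real obstacle: the argument is entirely mechanical once the product construction is written down, and the ``coordinatewise projection'' principle immediately reduces existence and uniqueness in $G(k,pq)$ to existence and uniqueness in $G(k,p)$ and $G(k,q)$ separately. The only point requiring a little care is choosing a bijection $[pq]\leftrightarrow[p]\times[q]$ at the outset so that the vectors $K\otimes K'$ are genuinely vectors in $[pq]^k$ rather than in $([p]\times[q])^k$; this is purely bookkeeping.
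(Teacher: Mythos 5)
Your proof is correct and follows essentially the same route as the paper: both use the Kronecker-type product of the two decompositions, with your bijection $[pq]\leftrightarrow[p]\times[q]$ corresponding to the paper's explicit choice $u=(w-1)p+v$. The only cosmetic difference is that you verify uniqueness of the covering $K_k$ directly by projection, whereas the paper checks only that each $K_\ell$ is covered and then concludes exactness from the count of $(pq)^\ell$ copies.
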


\begin{proof}
  Let $X_1,\dots,X_{p^\ell}$ be the vertex sets of copies of $K_k$ in
  $G(k,p)$ such that each $K_\ell$ subgraph appears in exactly one
  copy. Similarly, let $Y_1,\dots,Y_{q^\ell}$ be the vertex sets of
  copies of $K_k$ in $G(k,q)$ such that each $K_\ell$ subgraph of
  $G(k,q)$ appears in exactly one copy.  For $a\in[p^\ell]$ and
  $b\in[q^\ell]$, if $X_a=\{(i,v_i):i\in[k]\}$ and
  $Y_b=\{(i,w_i):i\in[k]\}$, then let $Z_{a,b}$ be the set of vertices
  $\{(i,(w_i-1)p+v_i):i\in[k]\}$ in $G(k,pq)$. Thus $Z_{a,b}$ induces
  a copy of $K_k$.

  Let $S=\{(c_i,u_i):i\in[\ell]\}$ be a set of vertices inducing a
  $K_\ell$ in $G(k,pq)$. Say $u_i=(w_i-1)p+v_i$ where
  $v_i\in[p]$ and $w_i\in[q]$. Since $\{(c_i,v_i):i\in[k]\}$ induces
  $K_\ell$ in $G(k,p)$, some $K_a$ contains
  $\{(c_i,v_i):i\in[k]\}$. Similarly, some $K_b$ contains
  $\{(c_i,w_i):i\in[k]\}$. By construction, $S\subseteq
  Z_{a,b}$. Hence the $Z_{a,b}$ are the vertex sets of copies of $K_k$
  in $G(k,pq)$ such that each $K_\ell$ subgraph of $G(k,pq)$ appears
  in some copy. There are $(pq)^\ell$ such sets $Z_{a,b}$. Thus the
  $Z_{a,b}$ are an $\ell$-decomposition of $G(k,pq)$.
\end{proof}

Lemmas~\ref{lem:Heart} and \ref{lem:BasicProduct} imply the following,
which is one of the main results of the paper.

\begin{theorem}
  \label{thm:NoSmallPrime}
  If $n\geq k\geq\ell\geq 2$ and no prime less than $k$ divides $n$,
  then $G(k,n)$ has an $\ell$-decomposition.
\end{theorem}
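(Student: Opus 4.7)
The plan is a short induction combining the two preceding lemmas, using the prime factorization of $n$. First I would write $n = p_1 p_2 \cdots p_m$ as a product of (not necessarily distinct) primes. The hypothesis that no prime less than $k$ divides $n$ guarantees that every $p_i$ satisfies $p_i \geq k \geq \ell \geq 2$, so each $p_i$ meets the hypotheses of Lemma~\ref{lem:Heart}; hence $G(k, p_i)$ admits an $\ell$-decomposition.

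Next I would induct on $m$. The base case $m = 1$ is exactly Lemma~\ref{lem:Heart}. For the inductive step, assume $G(k, p_1 \cdots p_{m-1})$ has an $\ell$-decomposition; pairing this with the $\ell$-decomposition of $G(k, p_m)$ and applying Lemma~\ref{lem:BasicProduct} with $p := p_1 \cdots p_{m-1}$ and $q := p_m$ produces an $\ell$-decomposition of $G(k, n)$. Note that the condition $n \geq k$ in the theorem statement is automatic from the prime-factor hypothesis as soon as $n \geq 2$, since every prime factor is already at least $k$.

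There is essentially no obstacle here, as the two lemmas were engineered for exactly this combination; the one thing worth checking is that Lemma~\ref{lem:BasicProduct} imposes no coprimality requirement on $p$ and $q$, so repeated prime factors (e.g.\ $n = k^2$) are handled by the same induction without modification.
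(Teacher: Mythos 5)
Your proof is correct and is exactly the argument the paper intends: the paper derives Theorem~\ref{thm:NoSmallPrime} directly from Lemmas~\ref{lem:Heart} and \ref{lem:BasicProduct} by factoring $n$ into primes, each at least $k$ by hypothesis, and iterating the product lemma. Your added checks (no coprimality needed in Lemma~\ref{lem:BasicProduct}, and $n\geq k$ following from the prime-factor hypothesis) are accurate.
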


Theorems~\ref{thm:CubeEquivalence} and \ref{thm:NoSmallPrime} imply:

\begin{theorem}
  If $n\geq k\geq\ell\geq 2$ and no prime less than $k$ divides $n$,
  then there exists a set of $k-\ell$ mutually invertible
  $\ell$-dimensional Latin cubes.
\end{theorem}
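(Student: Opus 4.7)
The plan is to prove this immediately by chaining together the two main results stated just before it, so essentially no new content is required. First I would invoke Theorem~\ref{thm:NoSmallPrime}: under the hypothesis that $n\geq k\geq\ell\geq 2$ and no prime less than $k$ divides $n$, there exists an $\ell$-decomposition of $G(k,n)$, i.e.\ a set $X$ of copies of $K_k$ in $G(k,n)$ such that every copy of $K_\ell$ lies in exactly one member of $X$.

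Next I would feed this $\ell$-decomposition into Theorem~\ref{thm:CubeEquivalence}. That theorem asserts the equivalence between the existence of an $\ell$-decomposition of $G(k,n)$ and the existence of $k-\ell$ mutually invertible $\ell$-dimensional Latin cubes of order $n$; so the ($\Longrightarrow$) direction converts $X$ into the desired family of cubes. This completes the argument.

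There is no real obstacle here: the statement is a direct corollary of the two previously established theorems, and the proof is essentially one sentence. The only thing worth emphasising for the reader is that the construction of the cubes is not abstract but explicit — the decomposition $X$ obtained from Lemmas~\ref{lem:Heart} and \ref{lem:BasicProduct} (Vandermonde over $\mathbb{Z}_p$ combined with the Kronecker-type product) is regarded as a set of $k$-tuples, $\ell$-extendability follows from the decomposition property, and then Lemma~\ref{lem:deftolatin} extracts the $k-\ell$ mutually invertible Latin cubes. So in the write-up I would simply cite Theorems~\ref{thm:NoSmallPrime} and \ref{thm:CubeEquivalence} in sequence and, if desired, add a brief remark pointing to the explicit construction.
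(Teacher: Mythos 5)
Your proposal is correct and matches the paper exactly: the paper states this theorem as an immediate consequence of Theorems~\ref{thm:NoSmallPrime} and \ref{thm:CubeEquivalence}, which is precisely the two-step chain you describe. Your added remark about the explicit construction (Vandermonde plus the product lemma, then Lemma~\ref{lem:deftolatin}) is accurate but not needed for the proof itself.
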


To generalise the above results, consider the following definition.
For integers $k\geq\ell\geq 1$ and $n\geq1$, let $f(k,n,\ell)$ be the
minimum number of copies of $K_k$ in $G(k,n)$ such that each $K_\ell$
subgraph of $G(k,n)$ appears in some copy. Note that $f(k,n,\ell)\geq
n^{\ell}$ because no two of the $n^{\ell}$ copies of $K_\ell$ that are
contained in the first $\ell$ colours classes of $G(k,n)$ are
contained in a single copy of $K_k$. And $f(k,n,\ell)=n^\ell$ if and
only if $G(k,n)$ has an $\ell$-decomposition.

\begin{lemma}
  \label{lem:PrimeLemma} For all $n$ and all $k$, there is an integer
  $n'$ such that $n\leq n'\leq n+e^{k+o(k)}$ and no prime less than
  $k$ divides $n'$.
\end{lemma}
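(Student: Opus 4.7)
The plan is to exploit the primorial $P(k):=\prod_{p\text{ prime},\,p<k}p$: the condition ``no prime less than $k$ divides $n'$'' is precisely the statement $\gcd(n',P(k))=1$. So the task is to find an integer coprime to $P(k)$ in a short interval starting at $n$, and then to bound $P(k)$ itself. These are two completely independent sub-tasks.

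For the first sub-task, I would show that any $P(k)$ consecutive integers contain one that is coprime to $P(k)$. The fastest justification is via the Chinese Remainder Theorem: as $x$ ranges over $\{n,n+1,\dots,n+P(k)-1\}$, the residues $x\bmod P(k)$ cover all of $\{0,1,\dots,P(k)-1\}$, so in particular some $n'$ in that interval satisfies $n'\equiv 1\pmod{P(k)}$, which is automatically coprime to $P(k)$. Hence one may take $n'\in[n,n+P(k)-1]$, i.e.\ $n'-n\leq P(k)-1$.

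For the second sub-task I would invoke the prime number theorem in its Chebyshev $\theta$-function formulation: since $\log P(k)=\sum_{p<k}\log p=\theta(k^-)$, and $\theta(t)=t+o(t)$ as $t\to\infty$, we obtain $\log P(k)=k+o(k)$, and therefore $P(k)-1\leq e^{k+o(k)}$. Combining with the previous paragraph gives an $n'\in[n,n+e^{k+o(k)}]$ coprime to $P(k)$, as required. (For small $k$ the inequality is trivial, absorbing any constants into the $o(k)$ error.)

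The main obstacle, such as it is, is purely to cite the correct analytic ingredient: the trivial bound $\theta(k)=O(k)$ due to Chebyshev only yields $P(k)\leq e^{Ck}$, whereas the statement demands the constant in the exponent to be exactly $1$, which really needs the full prime number theorem $\theta(k)\sim k$. No finer gap estimate (such as Iwaniec's $O((\log P(k))^2)$ bound on Jacobsthal's function) is needed, since the statement only asks for a gap of order $e^{k+o(k)}\sim P(k)$, which is matched by the crude CRT pigeonhole above.
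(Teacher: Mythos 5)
Your proposal is correct and follows essentially the same route as the paper: pick the least $n'\geq n$ with $n'\equiv 1\pmod{P(k)}$ where $P(k)$ is the primorial of the primes below $k$, and bound $P(k)\leq e^{k+o(k)}$ by the asymptotics of primorials (equivalently, $\theta(k)\sim k$). Your added remark that Chebyshev's $\theta(k)=O(k)$ alone would only give $e^{Ck}$ is a fair point the paper leaves implicit in its citation.
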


\begin{proof}
  Let $p$ be the product of all primes less than $k$. Let $n'$ be the
  minimum integer such that $n'\geq n$ and $n'\equiv 1\pmod{p}$. Thus
  $n'\leq n+p$ and no prime less than $k$ divides $n'$.  By the
  asymptotics of primorials, $p\leq e^{k+o(k)}$; see
  \citep{Primorials}. The result follows.
\end{proof}

Theorem~\ref{thm:NoSmallPrime} and Lemma~\ref{lem:PrimeLemma} imply
that $f(k,n,\ell)$ is never much more than $n^\ell$.

\begin{theorem}
  \label{thm:General}
  For fixed $k\geq\ell\geq 1$ and $n\geq1$,
$$f(k,n,\ell)\leq n^\ell+O(n^{\ell-1})\enspace.$$
\end{theorem}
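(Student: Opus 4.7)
The plan is to embed $G(k,n)$ inside $G(k,n')$ for some $n'$ slightly larger than $n$, pull back the $\ell$-decomposition of $G(k,n')$ provided by Theorem~\ref{thm:NoSmallPrime}, and then patch up the boundary where the decomposition sticks out of $G(k,n)$. Concretely, I would first invoke Lemma~\ref{lem:PrimeLemma} to choose an integer $n'$ with $n \leq n' \leq n+c_k$, where $c_k$ depends only on $k$, such that no prime less than $k$ divides $n'$. For $n$ at least some constant $n_0(k)$ this gives $n' \geq k$, so Theorem~\ref{thm:NoSmallPrime} yields an $\ell$-decomposition $\mathcal{X}$ of $G(k,n')$ consisting of exactly $(n')^\ell$ copies of $K_k$.

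Identifying $[n] \subseteq [n']$, I would call $K \in \mathcal{X}$ \emph{good} if every vertex of $K$ has second coordinate in $[n]$, and \emph{bad} otherwise. Let $g$ denote the number of good copies, and form $\mathcal{Y}$ from the good copies together with one arbitrarily chosen extending $K_k$ in $G(k,n)$ for every $K_\ell$-subgraph of $G(k,n)$ that is not already covered. Then $\mathcal{Y}$ is a set of copies of $K_k$ in $G(k,n)$ covering every $K_\ell$, so $f(k,n,\ell) \leq |\mathcal{Y}|$, and the task reduces to bounding $|\mathcal{Y}|$.

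The central estimate to be carried out is that the number of bad copies is $O_k(n^{\ell-1})$. The key observation is that each bad copy contains at least one vertex whose second coordinate exceeds $n$, and hence contains at least $\binom{k-1}{\ell-1}$ copies of $K_\ell$ lying in $G(k,n')$ but not in $G(k,n)$ (take that exceptional vertex together with any $\ell-1$ of the remaining $k-1$). Since $\mathcal{X}$ is an $\ell$-decomposition of $G(k,n')$, these $K_\ell$-subgraphs are distinct across different bad copies, while the total number of $K_\ell$-subgraphs of $G(k,n')$ outside $G(k,n)$ is $\binom{k}{\ell}\bigl((n')^\ell - n^\ell\bigr) = O_k(n^{\ell-1})$ because $n'-n \leq c_k$. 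This forces the number of bad copies, and consequently the number of $K_\ell$-subgraphs of $G(k,n)$ needing an extra $K_k$ (at most $\binom{k}{\ell}$ per bad copy), to be $O_k(n^{\ell-1})$.

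To finish, since good copies cover distinct $K_\ell$-subgraphs of $G(k,n)$ one has $g \leq n^\ell$, and combining with the extras gives $|\mathcal{Y}| \leq n^\ell + O_k(n^{\ell-1})$. The finitely many small $n < n_0(k)$ (and the trivial $\ell=1$ case, where $\{(i,\ldots,i):i\in[n]\}$ already covers every vertex) are absorbed into the $O$-constant. I expect the main obstacle to be precisely the wastage lower bound of $\binom{k-1}{\ell-1}$ per bad copy: without a positive lower bound here, bad copies could in principle be too numerous to patch cheaply and the argument would collapse; with it, the whole estimate goes through cleanly.
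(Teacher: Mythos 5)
Your proposal is correct, and it uses exactly the two ingredients the paper combines (Lemma~\ref{lem:PrimeLemma} to find $n'$ with $n\le n'\le n+e^{k+o(k)}$ avoiding small prime factors, then Theorem~\ref{thm:NoSmallPrime} to get an $\ell$-decomposition of $G(k,n')$ with $(n')^\ell$ blocks). The only place you diverge from what the authors evidently intend is the transfer step from $G(k,n')$ back to $G(k,n)$, and there you do substantially more work than is needed. The paper states the theorem as an immediate consequence, the implicit point being that $f(k,\cdot,\ell)$ is monotone: given the $(n')^\ell$ copies of $K_k$ covering $G(k,n')$, simply replace every vertex $(i,v_i)$ with $v_i>n$ by an arbitrary vertex $(i,v_i')$ with $v_i'\in[n]$. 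Each block remains a copy of $K_k$ in $G(k,n)$, every $K_\ell$ of $G(k,n)$ is untouched and hence still covered, and one gets $f(k,n,\ell)\le f(k,n',\ell)=(n')^\ell\le(n+e^{k+o(k)})^\ell=n^\ell+O(n^{\ell-1})$ for fixed $k$ in one line. Your alternative --- keep only the ``good'' blocks, bound the number of bad blocks by charging each one at least $\binom{k-1}{\ell-1}$ of the $\binom{k}{\ell}\bigl((n')^\ell-n^\ell\bigr)$ out-of-range copies of $K_\ell$, and patch the at most $\binom{k}{\ell}$ leftover $K_\ell$'s per bad block individually --- is sound (the disjointness you invoke really does follow from $\mathcal{X}$ being an exact decomposition), and it has the mild virtue of making the wastage quantitatively explicit; but the ``obstacle'' you flag, namely the need for a positive lower bound on the wastage per bad copy, is an artifact of your route rather than an intrinsic difficulty, since the projection argument sidesteps it entirely. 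Your handling of $\ell=1$ and of the finitely many $n$ below $n_0(k)$ is fine.
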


Finally we mention that Theorem~\ref{thm:General} with $k=6$ and
$\ell=3$ was recently applied to a problem in combinatorial geometry
\citep{FW-Triangles}. Indeed, this problem instigated our research.

\section{Note}

After submitting this paper we discovered that much of it is known in
the literature on ``orthogonal arrays'' and ``covering arrays''.  An
$\ell$-extendable set of $k$-tuples in $[n]^k$ is the set of columns
of an orthogonal array with $k$ constraints, $n$ levels and strength
$\ell$ (see \citep{OrthogonalArrays}), and $f(k,n,\ell)$ is the
covering array number $\operatorname{CAN}(\ell,k,n)$ (see
\citep{CoveringArrays}).  See \citep{BoseBush,Bush52a,Bush52b} for
some of the seminal results on orthogonal arrays, and see
\citep{Hartman,CoveringArrays} for more recent surveys. 
Our Lemma~\ref{lem:Heart} is Theorem~3.2 in \citep{Hartman}, our
Lemma~\ref{lem:BasicProduct} is Theorem~3.4 in \citep{Hartman}, and
our Theorem~\ref{thm:NoSmallPrime} is Corollary~3.5 in
\citep{Hartman}. Other results in this paper are probably previously
known.

%%%%%%%%%%%%%%%%%%%%%%%%%%%%%%%%%%%%%%%%%%%%%%%%%%%%%%%%%%%%%%
%\bibliographystyle{myNatbibStyle}
%\bibliography{myBibliography,myConferences}
%%%%%%%%%%%%%%%%%%%%%%%%%%%%%%%%%%%%%%%%%%%%%%%%%%%%%%%%%%%%%%

\def\soft#1{\leavevmode\setbox0=\hbox{h}\dimen7=\ht0\advance \dimen7
  by-1ex\relax\if t#1\relax\rlap{\raise.6\dimen7
  \hbox{\kern.3ex\char'47}}#1\relax\else\if T#1\relax
  \rlap{\raise.5\dimen7\hbox{\kern1.3ex\char'47}}#1\relax \else\if
  d#1\relax\rlap{\raise.5\dimen7\hbox{\kern.9ex \char'47}}#1\relax\else\if
  D#1\relax\rlap{\raise.5\dimen7 \hbox{\kern1.4ex\char'47}}#1\relax\else\if
  l#1\relax \rlap{\raise.5\dimen7\hbox{\kern.4ex\char'47}}#1\relax \else\if
  L#1\relax\rlap{\raise.5\dimen7\hbox{\kern.7ex
  \char'47}}#1\relax\else\message{accent \string\soft \space #1 not
  defined!}#1\relax\fi\fi\fi\fi\fi\fi}

\end{document}